\newcommand{\figleft}{{\em (Left)}}
\newcommand{\figright}{{\em (Right)}}
\def\Figref#1{Fig.~\ref{#1}}
\def\eqref#1{equation~\ref{#1}}
\def\Eqref#1{Equation~\ref{#1}}
\def\Twoeqref#1#2{Equations \ref{#1} and \ref{#2}}
\def\Algref#1{Algorithm~\ref{#1}}
\def\1{\bm{1}}
\def\vzero{{\mathbf{0}}}
\def\vp{{\mathbf{p}}}
\def\vs{{\mathbf{s}}}
\def\vv{{\mathbf{v}}}
\def\vx{{\mathbf{x}}}
\def\vy{{\mathbf{y}}}
\DeclareMathAlphabet{\mathsfit}{\encodingdefault}{\sfdefault}{m}{sl}
\SetMathAlphabet{\mathsfit}{bold}{\encodingdefault}{\sfdefault}{bx}{n}
\newcommand{\normltwo}{L^2}
\newcommand{\normlp}{L^p}
\DeclareMathOperator*{\argmin}{arg\,min}
\newcommand{\TODO}[1][]{\textcolor{red}{\bf TODO}} 
\begin{document}
\title{CGD: Modifying the Loss Landscape by Gradient Regularization}

\author{
    Shikhar Saxena \and %
    Tejas Bodas \and %
    Arti Yardi
}
\authorrunning{S. Saxena et al.}
\institute{International Institute of Information Technology (IIIT) Hyderabad, India \\
\email{\{shikhar.saxena@research.,tejas.bodas@,arti.yardi@\}iiit.ac.in}}

\maketitle              %

\begin{abstract}
Line-search methods are commonly used to solve optimization problems. The simplest line search method is steepest descent where one always moves in the direction of the negative gradient. Newton's method on the other hand is a second-order method that uses the curvature information in the Hessian to pick the descent direction. In this work, we propose a new line-search method called Constrained Gradient Descent (CGD) that implicitly changes the landscape of the objective function for efficient optimization. CGD is formulated as a solution to the constrained version of the original problem where the constraint is on a function of the gradient. We optimize the corresponding Lagrangian function thereby favourably changing the landscape of the objective function. This results in a line search procedure where the Lagrangian penalty acts as a control over the descent direction and can therefore be used to iterate over points that have smaller gradient values, compared to iterates of vanilla steepest descent. We establish global linear convergence rates for CGD and provide numerical experiments on synthetic test functions to illustrate the performance of CGD. We also provide two practical variants of CGD, CGD-FD which is a Hessian free variant and CGD-QN, a quasi-Newton variant and demonstrate their effectiveness.

\keywords{Numerical Optimization \and Gradient Regularization.}
\end{abstract}
\section{Introduction}

\begin{figure}[th]
    \centering
    \includegraphics[
        width=0.42\textwidth,
        alt={3D surface plots showing the "original loss function f₁(x)" (a blue, shallow curve) and the "steeper loss function f₂(x)" (a brown, steeper curve).}
    ]{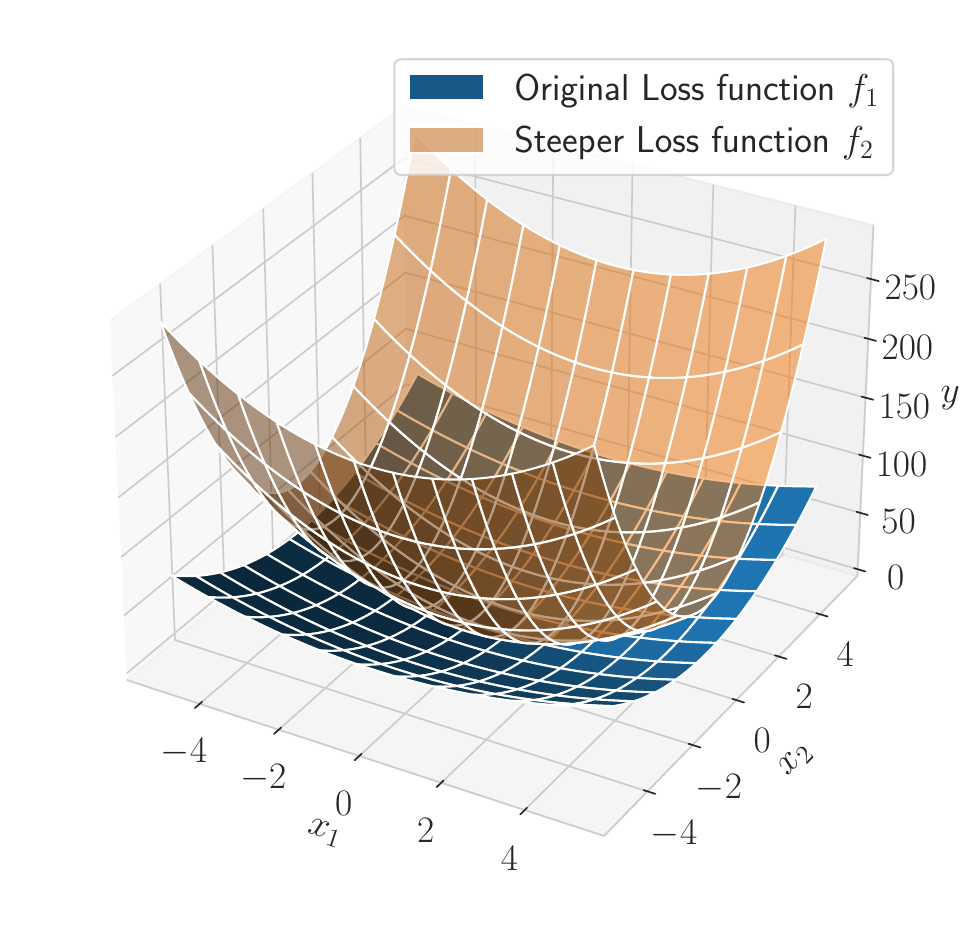}
    \includegraphics[
        width=0.52\textwidth,
        alt={A contour plot and gradient norm heatmap of the original loss function f₁(x) overlaid with two 5-step GD trajectories, one over f₁ and the other over f₂. Both start from the same point, but the path over f₂ reaches the local minimum more directly.}
    ]{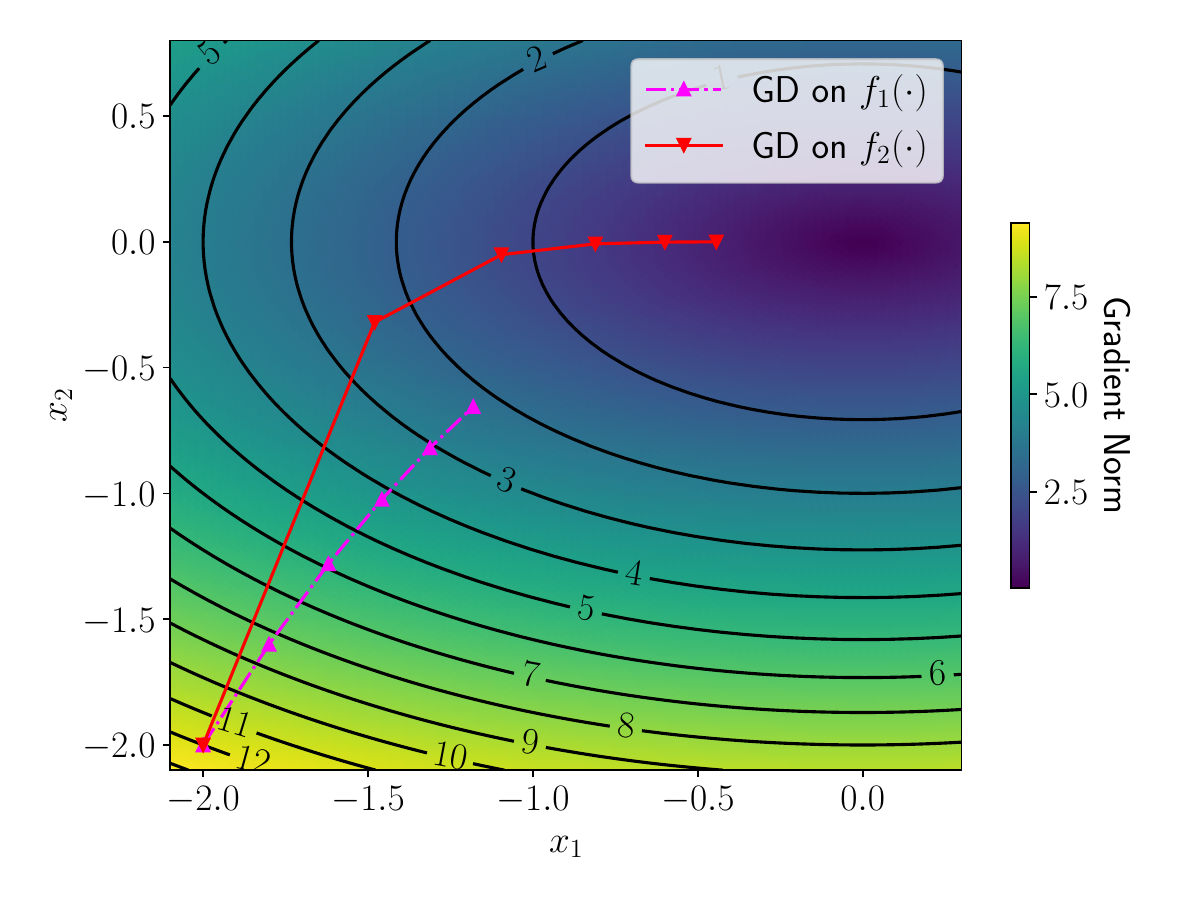}
    \caption{\figleft{} Loss function $f_1(\vx) = x_1^2 + 2 x_2^2$ and a steeper loss function $f_2(\vx)$.\\ \figright{} 5 steps of GD on functions $f_1(\vx)$ and $f_2(\vx)$ with a fixed step-size $\alpha =0.05$. The contours and the gradient norm heatmap are over the function $f_1(\vx)$.}
    \label{fig:surfaces}
\end{figure}

Line-search methods such as gradient descent and Newton's method have been extremely popular in solving unconstrained optimization problems \cite{ruder2017overviewgradientdescentoptimization}. 
However, vanilla version of such algorithms typically produce unsatisfactory results when applied to large scale optimization problems such as those involving deep neural networks. In such problems, the objective/loss function has a very non-linear landscape that is embedded with several local maxima and minima making them difficult to optimize \cite{DBLP:journals/corr/abs-1712-09913}. 
To tackle this problem, one of the approaches used is to modify the objective/loss function in such a way that the new function is easier to optimize. This is typically done by adding a regularization term that will make the loss landscape more smooth, and which would in turn reduce the chances of the optimizer reaching sub-optimal minima, thereby improving generalizability \cite{sam,du2022sharpness,zhuang2022surrogate,implicit-grad,pmlr-v162-zhao22i,pmlr-v202-karakida23a}. 

In this work, we focus on this idea of modifying the objective function for effective numerical optimization and propose an algorithmic paradigm to achieve it. To illustrate the key idea, consider minimizing a loss function $f_1(\cdot)$ over $\mathbb{R}^2$ as shown in \Figref{fig:surfaces} \figleft{}. Additionally consider a steeper loss function $f_2(\cdot)$ such that for every $\vx \in \mathbb{R}^2$ we have $f_1(\vx) \leq f_2(\vx)$. Note that the functions are such that their minima coincides. Now perform gradient descent (GD) with a fixed step-size on both these functions. Their respective trajectories are plotted in \Figref{fig:surfaces} \figright{} on the contours of $f_1$. It is clear from the figure that the iterates of GD on $f_2$ are much closer to the minima as compared to iterates of GD on $f_1$. We make this observation a focal point of this work and investigate if one can achieve the latter iterates (iterates from GD on $f_2$) on the former ($f_1$).

Towards this, we propose Constrained gradient descent (CGD), a variant of the GD algorithm which achieves this by constraining the gradient norm to be appropriately small. Instead of solving this constrained optimization problem, we consider the Lagrangian of this function and perform GD on it. The Lagrangian parameter $\lambda$ controls the penalty on gradient norm and thereby controls the steepness of the modified function. It is on this modified function that we seek to apply GD to possibly attain iterates that are much closer to the local minima as compared to GD iterates. From \Figref{fig:surfaces} \figright{} we additionally see that compared to GD, the CGD iterates (which is nothing but GD iterates on $f_2$) have a lower gradient norm. Since the path to minima is over points with lower gradient norm, this we believe is an attractive feature and even amounts to better generalization properties for high dimensional functions such as loss landscapes of neural networks. 
The idea of flat minima and their attractiveness goes long back to Hochreiter and Schmidhuber \cite{10.1162/neco.1997.9.1.1}: An optimal minimum is considered ``flat'' if the test error changes less in its neighbourhood. Keskar et al. \cite{Keskar_Mudigere_Nocedal_Smelyanskiy_Tang_2017} and Chaudhari et al. \cite{Chaudhari_2017} observe better generalization results for neural networks at flat minima. 
Gradient regularization has only been recently explored, that too from a numerical perspective for deep neural networks (DNNs) where it has been shown that fixed learning rates result in implicit gradient regularization ~\cite{pmlr-v162-zhao22i,pmlr-v202-karakida23a,implicit-grad,origin-of-implicit}.

A key aim of this work is to understand gradient regularization from the perspective of a numerical optimization algorithm and identify properties and features that may not have been obvious earlier. We believe CGD and its variants discussed in this work have the ability to find flatter minima, something which is useful while training neural networks.  
We summarize our contributions below:
\begin{itemize}
\item We propose a new line-search procedure called Constrained Gradient Descent (CGD) that performs gradient descent on a gradient regularized loss function. We also provide global linear convergence for CGD in Theorem \ref{thm:conv}. 
\item While CGD requires the Hessian information, we also propose a first-order variant of CGD using finite-difference approximation of the Hessian called \textit{CGD-FD}. We define appropriate stopping criteria in CGD-FD for settings where gradient computation can be expensive.
\item We conduct experiments over synthetic test functions to compare the performance of CGD and its variants compared to standard line-search procedures.
\item Our work also provides new insights to existing gradient regularization based methods. In fact, we re-interpret and identify pitfalls in the Explicit Gradient Regularization (EGR) Method \cite{implicit-grad} using our formulation.
\end{itemize}

The remainder of the paper is organized as follows. In the next section, we recall some preliminaries on line-search methods. We then discuss the CGD algorithm and propose its variants. We then illustrate the performance of our algorithm on several test functions and conclude with a discussion on future directions.     

\section{Notation and Preliminaries}

The set of real numbers and non-negative real numbers is denoted by $\mathbb{R}$ and $\mathbb{R}^{+}$ respectively.
We consider a vector $\mathbf{x} \in \mathbb{R}^n$ as a column vector given by $\mathbf{x} = \begin{bmatrix} x_1 & x_2 & \ldots & x_n \end{bmatrix}^T$. 
We use a boldface letter to denote a vector and lowercase letters (with subscripts) to denote its components. 
$\normlp$-norm of a vector $\mathbf{x}$ is defined as $\Vert\mathbf{x}\Vert_p = \left(\sum_i |{x_i}|^p\right)^{1/p}$. Setting $p=2$ gives us $\normltwo$-norm: $\Vert\vx\Vert \triangleq \Vert\vx\Vert_2 = \sqrt{\sum_i x_i^2} = \sqrt{\vx^T\vx}$.
$\lambda_{\max}(\cdot)$ and $\lambda_{\min}(\cdot)$ denote the maximum and minimum eigenvalue of a matrix respectively. 
Norm of a matrix is assumed to be the spectral norm \textit{i.e.}, $\|A\| = \sqrt{\lambda_{\max}(A^T A)}$.
$I_n$ denotes the identity matrix of size $n\times n$.
All zero vector of length $n$ is denoted by $\mathbf{0}_n$.

\subsection{Line-Search Methods}

Let $f(\mathbf{x})$ be a twice differential function with domain $\mathcal{D} \subseteq \mathbb{R}^n$ and codomain $\mathbb{R}$, i.e., 
$f:\mathcal{D} \rightarrow \mathbb{R}$. 
Let $\nabla f(\mathbf{x}_k)$ and $H(\mathbf{x}_k)$ denote the gradient and Hessian of the function $f(\mathbf{x})$ evaluated at point $\mathbf{x}_k \in \mathcal{D}$. 
For the sake of simplicity, we will also use the notation
$\nabla f_k$ and $H_k$ to denote $\nabla f(\mathbf{x}_k)$ and $H(\mathbf{x}_k)$ respectively. 
For the given function $f(\mathbf{x})$, 
we consider the problem of finding its minimizer, i.e., we wish to find ${\mathbf{x}^*}\in \mathcal{D}$ such that
\begin{align}
{\mathbf{x}^*} = \argmin_{
\mathbf{x} \in \mathcal{D}} f(\mathbf{x}).
\label{Eqn_x_minimizer_problem}
\end{align}
We focus on the situation where $\vx^*$ is obtained using an iterative line-search procedure (for details refer~\cite[Ch.~3]{Book_Numerical_Optimization}). The steepest descent (\emph{gradient descent}) method, Newton's method, and quasi-Newton's method are some examples of line-search methods. In an iterative algorithm, the key idea is to begin with an initial guess $\mathbf{x}_0$ for the minimizer and generate a sequence of vectors $\mathbf{x}_0, \mathbf{x}_1, \ldots$ until convergence (or until desired level of accuracy is achieved). In such algorithms, $\mathbf{x}_{k+1}$ is obtained using $\mathbf{x}_k$ using a pre-defined update rule. 

For line-search methods, the update rule can be written in a general form as, $
\mathbf{x}_{k+1} = \mathbf{x}_k + \alpha \vp_k
$
where $\alpha \in \mathbb{R}^{+}$ is the step-size (or learning rate) and $\vp_k \in \mathbb{R}^{n}$ corresponds to the direction in the $k^\text{th}$ iteration. 
For $\vp_k$ to be a \emph{descent} direction, the following condition must hold:
\begin{equation}
    \nabla f_k^T \vp_k < 0 \label{eqn:descent_cond}.
\end{equation}

\subsection{Quasi-Newton Methods}
Quasi-Newton methods (QN methods) are line-search methods that maintain an approximation of the inverse of the Hessian to emulate Newton's direction. 
The iterates here are of the form,
\begin{equation}
    \vx_{k+1} = \vx_{k} - \alpha G_k \nabla f_k
\end{equation}

where $G_k$ is a positive definite matrix that is updated at every step to approximate the inverse Hessian. 

We consider DFP and BFGS algorithms that update $G_k$ using symmetric rank-two updates at each descent step \cite{Broyden_1967,Fletcher1970ANA}. 
The update equations for DFP and BFGS are given as follows (for more details refer~\cite[Ch.~6]{Book_Numerical_Optimization}):
\begin{align*}
    \textbf{(DFP) } G_{k + 1} &= G_k + \frac{\vs_k \vs_k^T}{\vy_k^T \vs_k} - \frac{G_k \vy_k \vy_k^T G_k}{\vy_k^T G_k \vy_k}\\
    \textbf{(BFGS) } G_{k + 1} &= \left(I_n - \frac{\vs_k \vy_k^T}{\vy_k^T \vs_k}\right) G_k \left(I_n - \frac{\vy_k \vs_k^T}{\vy_k^T \vs_k}\right) + \frac{\vs_k \vs_k^T}{\vy_k^T \vs_k}
\end{align*}
where $\vs_k = \vx_{k + 1} - \vx_k$ and $\vy_k = \nabla f_{k + 1} - \nabla f_k$.

We can derive the corresponding Hessian approximation $\tilde{G}_k$ from the update steps for $G_k$ by applying the Sherman-Morrison-Woodbury formula~\cite[App. A]{Book_Numerical_Optimization}.
These are given as follows:
\begin{align}
    \textbf{(DFP) } \tilde{G}_{k + 1} &= \left(I_n - \frac{\vy_k \vs_k^T}{\vy_k^T \vs_k}\right) \tilde{G}_k \left(I_n - \frac{\vs_k \vy_k^T}{\vy_k^T \vs_k}\right) + \frac{\vy_k \vy_k^T}{\vy_k^T \vs_k}\label{eqn:dfp_update} \\
    \textbf{(BFGS) } \tilde{G}_{k + 1} &= \tilde{G}_k + \frac{\vy_k \vy_k^T}{\vy_k^T \vs_k} - \frac{\tilde{G}_k \vs_k \vs_k^T \tilde{G}_k}{\vs_k^T \tilde{G}_k \vs_k} \label{eqn:bfgs_update}
\end{align}

\section{Constrained Gradient Descent}\label{sec:cgd}

In this section we propose \textit{Constrained Gradient Descent} (CGD), an iterative line-search method to find a minimizer $\mathbf{x}^{*}$ of the given function $f(\mathbf{x})$ (see \Eqref{Eqn_x_minimizer_problem}). 
The key idea in our approach is to focus on the set of $\mathbf{x} \in \mathcal{D}$ such that $\nabla f(\mathbf{x})$ is close to zero.
For this consider a general constrained optimization problem:
\begin{equation}
    \vx^* = \argmin_{\substack{\mathbf{x} \in \mathcal{D}\\ h\left(\nabla f(\vx)\right) \leq \epsilon}} f(\vx)
    \label{Eqn_Our_approach}
\end{equation}
where the constraint $h(\cdot)$ is defined on the gradient and $\epsilon$ is a small positive real number.
The unconstrained optimization problem corresponding to \Eqref{Eqn_Our_approach} is obtained by penalizing the constraint with a Lagrange multiplier $\lambda > 0$:
\begin{equation} 
\mathbf{x}^{\star} = \argmin_{\mathbf{x} \in \mathcal{D}} \Big[ f(\mathbf{x}) + \lambda \ h \left(\nabla f(\vx)\right) \Big]
\label{eqn:formula}
\end{equation}
Note that $\epsilon$ doesn't affect the optimization and hence ignored from the objective. Also, observe that such a formulation provides us with a \textit{modified} loss function to optimize over. 
A suitable constraint $h(\cdot)$ will make the objective steeper while also keeping the stationary points intact. We call this \textit{penalization} or using a \textit{gradient penalty} since the objective is penalized at points based on its gradient value.
For CGD, we consider $h(\cdot)$ to be the square of $\normltwo$-norm of the gradient.
\begin{equation}
\mathbf{x}^{\star} = \argmin_{\mathbf{x} \in \mathcal{D} } g(\mathbf{x})
\triangleq  f(\mathbf{x}) + \lambda \Vert \nabla f(\mathbf{x}) \Vert^2 = f(\vx) + \lambda \left(\nabla f(\vx)^T \nabla f(\vx)\right)
\label{Eqn_CGD_formulation}
\end{equation}
Steepest descent iterates over $g(\cdot)$ in \Eqref{Eqn_CGD_formulation} are given as 
\begin{align}
\mathbf{x}_{k+1} &= \mathbf{x}_{k} - \alpha \nabla g_k \nonumber \\
&=\mathbf{x}_{k} - \alpha \left( \nabla f_k + 2\lambda  H_k  \nabla f_k \right) \label{eqn:grad_expr}\\
&= \mathbf{x}_{k} - \alpha B_k \nabla f_k \nonumber 
\end{align}
where $B_k \triangleq  I_n + 2\lambda H_k$. Thus, $\vp_k = - B_k \nabla f_k$ 
which is the direction taken at the $k^\text{th}$ iteration by CGD. 
Revisiting the example in \Figref{fig:surfaces}, the function $f_1(\vx) = x_1^2 + 2x_2^2$ was penalized with the square of $\normltwo$-norm of gradient as given in \Eqref{Eqn_CGD_formulation}.
Thus, the modified loss function $f_2(\vx) = x_1^2 + 2x_2^2 + \lambda \left((2x_1)^2 + (4 x_2)^2\right) = (1 + 4\lambda)x_1^2 + 2(1+8\lambda)x_2^2$ where $\lambda$ was chosen to be $0.4$.

We now provide a lemma that investigates if the penalized objective function has stationary points which are different from the original function and if so characterizes them.  
\begin{lemma} \label{lm:cond_asp}
Let $\mathcal{S}_{\hat{\mathbf{x}}}$ and $\mathcal{S}_{\mathbf{x}^{\star}}$ be the stationary points of $f(\vx)$ and $g(\vx)$ as defined in  \Eqref{Eqn_CGD_formulation}. Then, $\mathcal{S}_{\hat{\vx}} \subseteq \mathcal{S}_{\vx^*}$. Furthermore, for any $\vx^* \in \mathcal{S}_{\vx^*}$ one of the following is true:
     (a) $\vx^* \in \mathcal{S}_{\hat{\vx}}$ or
     (b) $\nabla f(\vx^*)$ is an eigenvector of $H(\vx^*)$ with the eigenvalue $-\frac{1}{2\lambda}$.
\end{lemma}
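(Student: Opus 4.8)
The plan is to work directly with the gradient of $g$, which is already recorded in \Eqref{eqn:grad_expr}: writing $B(\vx)\triangleq I_n + 2\lambda H(\vx)$, we have $\nabla g(\vx) = \nabla f(\vx) + 2\lambda H(\vx)\nabla f(\vx) = B(\vx)\,\nabla f(\vx)$. The only computation here is the identity $\nabla_{\vx}\!\big(\nabla f(\vx)^T\nabla f(\vx)\big) = 2H(\vx)\nabla f(\vx)$, which uses symmetry of the Hessian. With this in hand, $\mathcal{S}_{\hat{\vx}} = \{\vx\in\mathcal{D} : \nabla f(\vx) = \vzero_n\}$ and $\mathcal{S}_{\vx^*} = \{\vx\in\mathcal{D} : B(\vx)\nabla f(\vx) = \vzero_n\}$, and the whole lemma becomes a statement about the null space of $B(\vx)$ acting on $\nabla f(\vx)$.

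For the inclusion $\mathcal{S}_{\hat{\vx}}\subseteq\mathcal{S}_{\vx^*}$, I would simply observe that if $\vx\in\mathcal{S}_{\hat{\vx}}$ then $\nabla f(\vx) = \vzero_n$, so $\nabla g(\vx) = B(\vx)\vzero_n = \vzero_n$ regardless of $\lambda$ or $H(\vx)$, hence $\vx\in\mathcal{S}_{\vx^*}$.

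For the dichotomy, take any $\vx^*\in\mathcal{S}_{\vx^*}$, so $B(\vx^*)\nabla f(\vx^*) = \vzero_n$. Case split on whether $\nabla f(\vx^*) = \vzero_n$: if yes, then $\vx^*\in\mathcal{S}_{\hat{\vx}}$, which is alternative (a). If not, then $\nabla f(\vx^*)\neq\vzero_n$ and $\big(I_n + 2\lambda H(\vx^*)\big)\nabla f(\vx^*) = \vzero_n$, which rearranges to $H(\vx^*)\nabla f(\vx^*) = -\tfrac{1}{2\lambda}\nabla f(\vx^*)$; since $\lambda>0$ this is well defined, and since $\nabla f(\vx^*)$ is a nonzero vector this is exactly the assertion that $\nabla f(\vx^*)$ is an eigenvector of $H(\vx^*)$ with eigenvalue $-\tfrac{1}{2\lambda}$, i.e.\ alternative (b). The two alternatives are mutually exclusive (an eigenvector is by definition nonzero) and, by the case split, exhaustive on $\mathcal{S}_{\vx^*}$, completing the proof.

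There is essentially no hard step; the one subtlety worth flagging is the derivative identity $\nabla_{\vx}\Vert\nabla f(\vx)\Vert^2 = 2H(\vx)\nabla f(\vx)$, which tacitly uses that the mixed second partials of $f$ commute so that $H$ is symmetric — this holds under the standing assumption that $f$ is twice (continuously) differentiable, and is already implicit in the algebra of \Eqref{eqn:grad_expr}. It is also worth remarking that the eigenvalue $-\tfrac{1}{2\lambda}$ is strictly negative, so these extra stationary points of $g$ can only arise at points where $f$ has a direction of negative curvature aligned with $\nabla f$; this both explains why they are typically saddle-type and motivates the role of $\lambda$ in controlling where (if at all) they appear.
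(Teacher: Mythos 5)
Your proof is correct and follows essentially the same route as the paper: write $\nabla g(\vx) = (I_n + 2\lambda H(\vx))\nabla f(\vx)$, note the inclusion is immediate, and split on whether $\nabla f(\vx^*)$ vanishes to obtain the eigenvector condition $H(\vx^*)\nabla f(\vx^*) = -\tfrac{1}{2\lambda}\nabla f(\vx^*)$. Your added remarks (Hessian symmetry underlying $\nabla\Vert\nabla f\Vert^2 = 2H\nabla f$, and the negativity of the eigenvalue) are sound but not needed beyond what the paper's own proof already records.
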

\begin{proof}
    From \Eqref{eqn:grad_expr}, for any $\hat{\mathbf{x}}\in \mathcal{S}_{\hat{\mathbf{x}}}, \ \nabla f(\hat{\mathbf{x}}) = \mathbf{0}_n \Rightarrow \nabla g(\hat{\vx}) = \mathbf{0}_n$.
    So, $\mathcal{S}_{\hat{\mathbf{x}}}\subseteq \mathcal{S}_{\mathbf{x}^{*}}$ holds trivially.
    \noindent{}
    Now for any $\vx^* \in \mathcal{S}_{\vx^*}$,
    \(\nabla g(\vx^*) = \vzero_n \Rightarrow (I_n + 2 \lambda H(\vx^*)) \nabla f(\vx^*) = \vzero_n\).
 If $\nabla f(\vx^*)\not = \vzero_n$, we have $H(\vx^*) \nabla f(\vx^*) = - \frac{1}{2\lambda} \nabla f(\vx^*)$ which proves the result. \qed
\end{proof}
The above lemma illustrates that additional stationary points could possibly be introduced and some of these points could also be local minima. The lemma also characterizes conditions under which this is true and therefore such points can easily be detected. A perturbation from the current $\lambda$ in that case results in the iterate to descend further, possibly moving towards a better minimum.

\begin{figure}[!htb]
    \centering
    \includegraphics[width=0.65\linewidth,
    alt={Two curves plotted over a shared x-axis: a blue curve labeled "Loss Landscape f(x)" and an orange curve labeled "Penalized Landscape g(x)". The penalized curve introduces more stationary points than the original. Black arrows mark the negative gradient directions on both curves, indicating the change in descent direction. Gradients are shown at the point x = -7.7, which is highlighted by a dashed vertical line.}
    ]{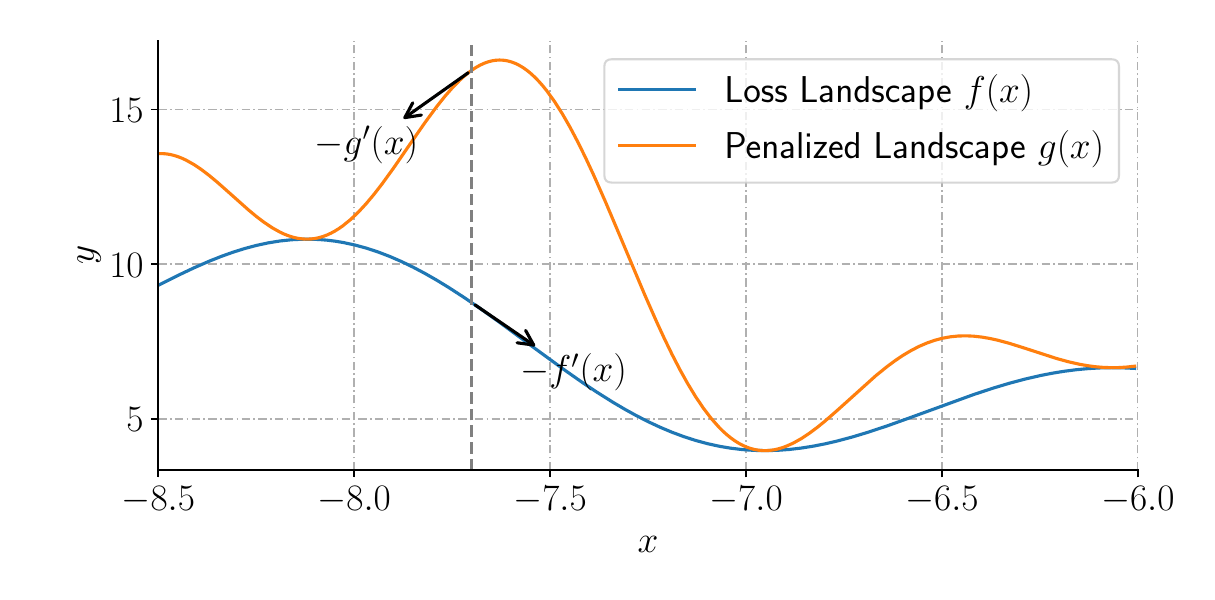}
    \caption{Penalization introducing new stationary points at $x\approx -6.5$ and $x\approx -7.6$, and changing the local maximum at $x\approx -8.1$ to a local minimum.}
    \label{fig:ascent}
\end{figure}

Further note that within the current scheme, the nature of stationary points of $f(\vx)$ might change in $g(\vx)$. Particularly, the local maxima and saddle points of $f(\vx)$ might become local minima in $g(\vx)$ depending on $\lambda$. This happens because penalization does not change the function value at the stationary point while spiking up the function values at points in the neighbourhood.

As a result, descent on the penalized function might actually cause ascent over the original loss function. For example in \Figref{fig:ascent}, observe that at $x=-7.7$ (marked with the dashed line), steepest descent along $- \nabla g(\vx)$ direction would actually cause an ascent over the original loss function.

To fix this behaviour, we ensure that we only move along the direction $-\nabla g(\vx)$ when it is a descent direction. Therefore, we only move along $-\nabla g(\vx)$, if $\nabla f(\vx)^T \nabla g(\vx) > 0$ (from \Eqref{eqn:descent_cond}). Otherwise, we set $\lambda = 0$ at this point and move along the $-\nabla f(\vx)$ direction. Note that this simple check also helps us to avoid stopping at artificially introduced stationary points. 
We summarize CGD in \Algref{algo:cgd}.

\begin{algorithm}[!htb]
    \caption{Constrained Gradient Descent (CGD)}
    \label{algo:cgd}
    \textbf{Input}: Objective function $f: \mathcal{D} \rightarrow \mathbb{R}$, initial point $\vx_0$, max iterations $T$, step-size $\alpha$, regularization coefficient $\lambda$.\\
    \textbf{Output}: Final point $\vx_T$.
    \begin{algorithmic}[1]
    \For{iteration $k = 0,\ldots, T - 1$}
        \State \(\vp_k \gets - \left(I_n + 2 \lambda_k H_{k}\right) \nabla f_{k}\)
        \If{$\nabla f_k^T \vp_k < 0$}  
        \Comment{Check if $\vp_k$ is a Descent Direction}
        \State $\vx_{k + 1} \gets \vx_{k} + \alpha \vp_k$
        \Else
        \State $\vx_{k + 1} \gets \vx_{k} - \alpha \nabla f_k$
        \EndIf
    \EndFor
    \State \Return{$\vx_{T}$}      
    \end{algorithmic}
\end{algorithm}

\subsection{Global Linear Convergence of CGD}
We begin with the following definition followed by a theorem on convergence guarantees for CGD.
\begin{definition}[PL Inequality \cite{polyak_gm}]
    A function $f: \mathcal{D} \rightarrow \mathbb{R}$ satisfies the PL inequality if for some $\mu > 0$,
    \begin{equation}
        \frac{1}{2} \Vert \nabla f(x) \Vert^2  \ge \mu(f(x) - f^*),  \ \forall x \in \mathcal{D}.
        \label{eqn:pl_ineq_def}
    \end{equation}
where $f^*$ is the optimal function value.
\end{definition}

\begin{theorem}\label{thm:conv}
Let $f$ be a convex, $L$-smooth function on $\mathcal{D} \subseteq \mathbb{R}^n$. Let $\{\vx_k\}_{k \ge 0}$ be the sequence generated by the CGD method as described in \Eqref{eqn:grad_expr}.
Then for a constant step-size \(\alpha \in \left( 0,\frac{2}{{L(1 + 2\lambda L)}^{2}} \right)\), the following holds true \cite[Theorem~4.25]{beck2014introduction}:
\begin{enumerate}
    \item The sequence $\{f_k\}_{k \ge 0}$ is nonincreasing. In addition, for any $k \ge 0, f_{k + 1} < f_k$ unless $\nabla f_k = \vzero$.
    \item $\nabla f_k \rightarrow 0$ as $k \rightarrow \infty$.
\end{enumerate}
Furthermore, if $f$ satisfies the PL inequality (\Eqref{eqn:pl_ineq_def}) then the CGD method with a step-size of $\frac{1}{{L(1 + 2\lambda L)}^{2}}$, has a global linear convergence rate,
\[
f_k - f^* \le \left(1 - \frac{\mu}{{L(1 + 2\lambda L)}^{2}} \right)^k (f_0 - f^*).
\]
\end{theorem}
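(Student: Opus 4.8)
The plan is to treat CGD as a \emph{scaled} (variable-metric) gradient method, $\vx_{k+1} = \vx_k - \alpha B_k \nabla f_k$ with symmetric scaling matrix $B_k = I_n + 2\lambda H_k$, and to run the textbook fixed-step descent-lemma argument with $B_k$ in place of $I_n$. The first step is to pin down the spectrum of $B_k$. Convexity of $f$ gives $H_k \succeq 0$ and $L$-smoothness gives $H_k \preceq L I_n$, so $I_n \preceq B_k \preceq (1+2\lambda L) I_n$; equivalently every eigenvalue of $B_k$ lies in $[1,\,1+2\lambda L]$. In particular $B_k \succ 0$, so $-B_k\nabla f_k$ is automatically a descent direction in this convex setting, the safeguard in Algorithm~\ref{algo:cgd} is never triggered, and the iteration being analyzed coincides with \Eqref{eqn:grad_expr}.

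Next I would apply the descent lemma (the standard consequence of $L$-smoothness, $f(\vy) \le f(\vx) + \nabla f(\vx)^T(\vy - \vx) + \tfrac{L}{2}\Vert \vy - \vx\Vert^2$) along $\vx_{k+1} - \vx_k = -\alpha B_k\nabla f_k$:
\begin{equation*}
f_{k+1} \le f_k - \alpha\,\nabla f_k^T B_k \nabla f_k + \frac{L\alpha^2}{2}\Vert B_k \nabla f_k\Vert^2 .
\end{equation*}
The bound $\lambda_{\min}(B_k)\ge 1$ gives $\nabla f_k^T B_k \nabla f_k \ge \Vert \nabla f_k\Vert^2$, and $\lambda_{\max}(B_k)\le 1+2\lambda L$ gives $\Vert B_k \nabla f_k\Vert^2 \le (1+2\lambda L)^2\Vert \nabla f_k\Vert^2$, hence
\begin{equation*}
f_{k+1} \le f_k - \alpha\!\left(1 - \frac{L\alpha(1+2\lambda L)^2}{2}\right)\!\Vert \nabla f_k\Vert^2 .
\end{equation*}
The stated range $\alpha \in \bigl(0, \tfrac{2}{L(1+2\lambda L)^2}\bigr)$ is exactly the condition making the bracketed constant $c_\alpha \triangleq \alpha\bigl(1 - \tfrac{L\alpha(1+2\lambda L)^2}{2}\bigr)$ strictly positive, which gives claim~(1): $f_{k+1}\le f_k$, strictly unless $\nabla f_k = \vzero$. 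Summing over $k = 0,\dots,N-1$ and using $f_N \ge f^*$ yields $c_\alpha \sum_{k=0}^{N-1}\Vert \nabla f_k\Vert^2 \le f_0 - f^*$ for every $N$, so the series converges and $\nabla f_k \to \vzero$, which is claim~(2).

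For the linear rate I would specialize to $\alpha = \tfrac{1}{L(1+2\lambda L)^2}$, for which $c_\alpha = \tfrac{1}{2L(1+2\lambda L)^2}$, so that
\begin{equation*}
f_{k+1} - f^* \le (f_k - f^*) - \frac{1}{2L(1+2\lambda L)^2}\Vert \nabla f_k\Vert^2 .
\end{equation*}
Inserting the PL inequality \Eqref{eqn:pl_ineq_def} in the form $\Vert \nabla f_k\Vert^2 \ge 2\mu(f_k - f^*)$ gives
\begin{equation*}
f_{k+1} - f^* \le \left(1 - \frac{\mu}{L(1+2\lambda L)^2}\right)(f_k - f^*),
\end{equation*}
and unrolling from $k = 0$ produces the claimed bound; one should note the contraction factor is in $[0,1)$ because $(1+2\lambda L)^2 \ge 1$ and $\mu \le L$ for any $L$-smooth function (the latter following from $\Vert \nabla f(\vx)\Vert^2 \le 2L(f(\vx) - f^*)$ combined with PL).

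I do not anticipate a genuine obstacle: the argument is a direct adaptation of the constant-step gradient-descent analysis to a variable metric, and the only places requiring care are the two-sided eigenvalue bound $I_n \preceq B_k \preceq (1+2\lambda L) I_n$ (where convexity and $L$-smoothness are used) and the constant bookkeeping so that the admissible step-size interval and the final rate emerge exactly as stated — which is precisely the content of \cite[Theorem~4.25]{beck2014introduction} applied with $B_k = I_n + 2\lambda H_k$.
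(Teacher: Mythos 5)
Your proposal is correct and follows essentially the same route as the paper: the descent lemma applied along the scaled step $-\alpha B_k\nabla f_k$, the spectral bounds $I_n \preceq B_k \preceq (1+2\lambda L)I_n$ from convexity and $L$-smoothness, the resulting per-step decrease $c_\alpha\Vert\nabla f_k\Vert^2$, and the PL inequality at $\alpha = \tfrac{1}{L(1+2\lambda L)^2}$ unrolled recursively. The only cosmetic difference is that you deduce $\nabla f_k \to \vzero$ by summing the decrease inequality, whereas the paper invokes convergence of the nonincreasing, bounded-below sequence $\{f_k\}$ — both rest on the same key estimate.
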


\begin{proof}
    Since $f$ is convex and $L$-smooth we have,
\begin{itemize}
\item 
  \(\mathbf{0} \preceq H \preceq L{I} \Leftrightarrow 0 \leq \mathbf{v}^{T}H\mathbf{v} \leq L\left\| \mathbf{v} \right\|^{2} \Leftrightarrow 0 \leq \left\| H \right\| \leq L\)
\item
  \(f\left( \mathbf{y} \right) \leq f\left( \mathbf{x} \right) + \nabla{f\left( \mathbf{x} \right)}^{T}\left( \mathbf{y} - \mathbf{x} \right) + \frac{L}{2}\left\| {\mathbf{y} - \mathbf{x}} \right\|^{2}\)
\end{itemize}
    Substituting \(\mathbf{x}_{k + 1}\text{ and }\mathbf{x}_{k}\) in place of
\(\mathbf{y}\text{ and }\mathbf{x}\). And
\(\mathbf{x}_{k + 1} - \mathbf{x}_{k} = - \alpha\nabla g_{k}\),
\begin{equation}
f_{k + 1} \leq f_{k} - \alpha\nabla f_{k}^{T}\nabla g_{k} + \frac{L}{2}\alpha^{2}\left\| {\nabla g_{k}} \right\|^{2} \label{eqn:l_sm_expr}
\end{equation}
Bounds on $\Vert\nabla g_k\Vert^2$ and $\nabla f_k^T \nabla g_k$ are found as follows: 
\begin{align*}
\left\| {\nabla g_{k}} \right\|^{2} & = \left\| {\nabla f_{k} + 2\lambda H_{k}\nabla f_{k}} \right\|^{2} \\
 & = \left\| {\nabla f_{k}} \right\|^{2} + 4\lambda\nabla f_{k}^{T}H_{k}\nabla f_{k} + 4\lambda^{2}\left\| {H_{k}\nabla f_{k}} \right\|^{2} \\
 & \leq \left\| {\nabla f_{k}} \right\|^{2} + 4\lambda\nabla f_{k}^{T}H_{k}\nabla f_{k} + 4\lambda^{2}\left\| H_{k} \right\|^{2}\left\| {\nabla f_{k}} \right\|^{2} \\
 & \leq \left\| {\nabla f_{k}} \right\|^{2}\left( 1 + 4\lambda L + 4\lambda^{2}L^{2} \right) = (1 + 2\lambda L)^{2}\left\| {\nabla f_{k}} \right\|^{2}
\end{align*}
and \(\nabla f_{k}^{T}\nabla g_{k} = \left\| {\nabla f_{k}} \right\|^{2} + 2\lambda\nabla f_{k}^{T}H_{k}\nabla f_{k} \ge \left\| {\nabla f_{k}} \right\|^{2}\).
Substituting in \Eqref{eqn:l_sm_expr}, 
\begin{align}
f_{k + 1} - f_{k} & \leq - \alpha\left\| {\nabla f_{k}} \right\|^{2} + \frac{L}{2}\alpha^{2}(1 + 2\lambda L)^{2}\left\| {\nabla f_{k}} \right\|^{2} \nonumber \\
 & = - \alpha\left( 1 - \frac{{L(1 + 2\lambda L)}^{2}\alpha}{2} \right)\left\| {\nabla f_{k}} \right\|^{2} \label{eqn:my_m}
\end{align}

Thus, we have, \(f_k - f_{k+1} \ge M \Vert \nabla f_k \Vert^2 \ge 0\)
where $M =  \alpha\left( 1 - \frac{{L(1 + 2\lambda L)}^{2}\alpha}{2} \right) > 0$ for constant step-size \(\alpha \in \left( 0,\frac{2}{{L(1 + 2\lambda L)}^{2}} \right)\). 
So, the equality $f_{k + 1} = f_{k}$ only holds when $\nabla f_k = \vzero$. Furthermore, since the sequence $\{f_k\}_{k\ge 0}$ is non-increasing and bounded below, it converges. So, $\nabla f_k \rightarrow 0$ as $k \rightarrow \infty$.
This proves the first two parts. For the third part, consider the $\mu$-PL assumption; from \Twoeqref{eqn:pl_ineq_def}{eqn:my_m} at step-size $\alpha =  \frac{1}{{L(1 + 2\lambda L)}^{2}}$, we have
\begin{align*}
f_{k + 1} - f_{k} &\leq  - \frac{1}{2 {L(1 + 2\lambda L)}^{2}} \left\| {\nabla f_{k}} \right\|^{2} \le - \frac{\mu}{{L(1 + 2\lambda L)}^{2}} (f_k - f^*) \\
\text{On subtracting $f^*$}&\text{ from both sides,}\\
\therefore f_{k + 1} - f^* &\le \left(1 - \frac{\mu}{{L(1 + 2\lambda L)}^{2}} \right) (f_k - f^*) 
\end{align*}
Applying this inequality recursively gives us the result.
\qed
\end{proof}

Note that the PL inequality assumption is not necessary for functions that are strongly convex or strictly convex functions (over compact sets) since the inequality is already satisfied. Karimi et al. \cite{karimi2020} further shows that this convergence result can be extended to functions of the form $h(A\vx)$ where $h$ is a strongly (or strictly) convex function composed with a linear function \textit{e.g.}, least-squares problem and logistic regression.
\subsubsection{CGD on Strongly Quadratic Function.} Next we investigate the rate of convergence of CGD on a strongly quadratic function as our objective. Such an analysis helps in identifying how penalization affects the \textit{condition number}, which essentially controls the geometry of descent. For a matrix $A$, the condition number is defined as  $\kappa(A) \triangleq \frac{\lambda_{\max}{(A)}}{\lambda_{\min}(A)}$. A higher condition number or \textit{ill conditioning} implies a more zig-zagging behaviour and thus, slower convergence. 

Let \(f\left( \mathbf{x} \right) = \frac{1}{2}\mathbf{x}^{T}Q\mathbf{x} - \mathbf{b}^{T}\mathbf{x}\) where $Q \succ 0$.
Let $l \triangleq \lambda_{\min}(Q) $ and $L \triangleq \lambda_{\max}(Q)$.
From \Eqref{eqn:grad_expr} we know \(\mathbf{x}_{k + 1} = \mathbf{x}_{k} - \alpha B_{k}\nabla f_{k}\) where \(B_{k} = {I} + 2\lambda Q\) and
\(\nabla f\left( \mathbf{x}_{k} \right) = Q\mathbf{x}_{k} - \mathbf{b}\).
The error at each iterate is then given as:
\begin{align}
\Vert \mathbf{e}_{k + 1} \Vert & = \left\| {\mathbf{x}_{k + 1} - \mathbf{x}^{\ast}} \right\| = \left\| {\mathbf{x}_{k} - \alpha B_{k}\nabla f_{k} - \mathbf{x}^{\ast}} \right\|\nonumber \\
&= \left\| {\mathbf{x}_{k} - \alpha B_{k}\left( Q\mathbf{x}_{k} - \mathbf{b} \right) - \mathbf{x}^{\ast}} \right\| \nonumber \\
 & = \left\| {\left( {I} - \alpha B_{k}Q \right)\mathbf{x}_{k} + \alpha B_{k}\mathbf{b} - \mathbf{x}^{\ast}} \right\| \nonumber \\
 & = \left\| {\left( {I} - \alpha B_{k}Q \right)\mathbf{x}_{k} + \alpha B_{k}\mathbf{b} - \mathbf{x}^{\ast} + \left(\alpha B_{k}Q\mathbf{x}^{\ast} - \alpha B_{k}Q\mathbf{x}^{\ast} \right)}  \right\| \nonumber \\
 & = \left\| {\left( {I} - \alpha B_{k}Q \right)\left( \mathbf{x}_{k} - \mathbf{x}^{\ast} \right) + \alpha B_{k}\left( \mathbf{b} - Q\mathbf{x}^{\ast} \right)} \right\|\qquad \text{(on rearrangement) } \nonumber \\
 & = \left\| {\left( {I} - \alpha B_{k}Q \right)\mathbf{e}_{k}} \right\| \qquad \text{(Gradient at }\mathbf{x}^{\ast} = \mathbf{0}\text{)}\nonumber \\
\therefore \left\| \mathbf{e}_{k+1}\right\|& \leq \left\| {{I} - \alpha B_{k}Q} \right\|\left\| \mathbf{e}_{k} \right\| \label{eqn:eqn_error}
\end{align} 
Since \({I}\) and \(Q\) commute, we have \((1 + 2\lambda l){I} \preccurlyeq B_{k} \preccurlyeq (1 + 2\lambda L){I}\). Also since $Q$ and $B_k$ are both symmetric positive-definite matrices and commute, we can say the following about $B_k Q$, 
\begin{align*}
    &\lambda_{\min}\left( B_{k} \right)\lambda_{\min}(Q) I \preccurlyeq B_k Q \preccurlyeq \lambda_{\max}\left( B_{k} \right)\lambda_{\max}(Q) I \\
    \Leftrightarrow \ &\lambda_{\min}\left( B_{k} \right)\lambda_{\min}(Q) \leq \left\| B_k Q \right\| \leq \lambda_{\max}\left( B_{k} \right)\lambda_{\max}(Q) 
\end{align*} 
\begin{align*}
    \therefore \left\| {{I} - \alpha B_{k}Q} \right\| &\le \max\left\{
    \left| 1 - \alpha\lambda_{ \min } (B_{k})\lambda_{\min}(Q) \right|, 
    \left| 1 - \alpha\lambda_{ \max }(B_{k}) \lambda_{\max}(Q) \right| 
\right\}\\
    &= \max\left\{ \left| 1 - \alpha (1 + 2 \lambda l) l \right|, 
\left| 1 - \alpha (1 + 2 \lambda L) L \right| \right\}
\end{align*}
So in \Eqref{eqn:eqn_error},
\begin{align*}
\left\| {\mathbf{x}_{k + 1} - \mathbf{x}^{\ast}} \right\| & \leq 
\max\left\{ \left| 1 - \alpha (1 + 2 \lambda l) l \right|, 
\left| 1 - \alpha (1 + 2 \lambda L) L \right| \right\}
\left\| {\mathbf{x}_{k} - \mathbf{x}^{\ast}} \right\| \\
 &\le \left( 
\max\left\{ \left| 1 - \alpha (1 + 2 \lambda l) l \right|, 
\left| 1 - \alpha (1 + 2 \lambda L) L \right| \right\}
  \right)^{k}\left\| {\mathbf{x}_{0} - \mathbf{x}^{\ast}} \right\|
\end{align*}

Hence for CGD to converge, 
\[  \max\left\{ \left| 1 - \alpha (1 + 2 \lambda l) l \right|, 
    \left| 1 - \alpha (1 + 2 \lambda L) L \right| \right\} < 1, \ \forall k
\]

A suitable $\alpha$ that satisfies this condition is chosen to be \[\alpha = \frac{2}{\lambda_{\max}( B_{k})\lambda_{\max}(Q) + \lambda_{\min}(B_{k})\lambda_{\min}(Q)} = \frac{2}{(1 + 2\lambda L)L + (1 + 2\lambda l)l}\]
Both terms within the $\max(\cdot)$ achieve the same value at this $\alpha$.
\begin{align*}
\therefore \left\| {\mathbf{x}_{k + 1} - \mathbf{x}^{\ast}} \right\| & \leq \left( \frac{(1 + 2\lambda L)L - (1 + 2\lambda l)l}{(1 + 2\lambda L)L + (1 + 2\lambda l)l} \right)^{k}\left\| {\mathbf{x}_{0} - \mathbf{x}^{\ast}} \right\| \\
&= \left( \frac{\lambda_{\max}(B_k) \lambda_{\max}(Q) - \lambda_{\min}(B_k)\lambda_{\min} (Q)}{\lambda_{\max}(B_k) \lambda_{\max}(Q) + \lambda_{\min}(B_k) \lambda_{\min}(Q)} \right)^{k}\left\| {\mathbf{x}_{0} - \mathbf{x}^{\ast}} \right\|\\
&= \left(\frac{{\kappa}-1}{{\kappa} + 1}\right)^k \|\vx_0 - \vx^*\| 
\end{align*}
where ${\kappa} = \kappa(B_k) \kappa(Q)$.

\begin{remark}
Note that GD has a similar rate with the condition number there being $\kappa(Q)$ instead. And since $\kappa(B_k) > 0$, $\kappa > \kappa(Q)$. Thus, the condition number worsens while descending over the penalized landscape in comparison to the original objective. This indicates that our method would perform poorly for functions that have a high condition number and thus, take longer to converge.
\end{remark}

\subsection{CGD-FD: Finite Difference Approximation of the Hessian}
Note that CGD is a second-order optimization algorithm as it requires the Hessian information to compute each iterate. We improve over this complexity by restricting CGD to be a first-order line search method wherein the Hessian is approximated using a finite difference~\cite{Pearlmutter1994FastEM}. Using Taylor series, we know that \[\nabla f(\vx_k +\Delta \vx) = \nabla f_k + H_k \Delta\vx + \mathcal{O} (\Vert\Delta\vx\Vert^2). \]
Now let $\Delta\vx = r \vv$ where $r$ is arbitrarily small. Then, we can rewrite the above expression as:
\[
H\vv = \frac{\nabla f(\vx_k + r \vv) - \nabla f_k}{r} + \mathcal{O}(\Vert r\Vert).
\]
Substituting $\vv = \nabla f_k$ and using this approximation in \Eqref{eqn:grad_expr} gives us  
\begin{align}
\nabla g(\vx_k) &\approx \nabla f_k + 2 \lambda  \frac{\nabla f(\vx_k + r \nabla f_k) - \nabla f_k}{r} \nonumber\\
&= (1 - \nu) \nabla f_k + \nu \nabla f(\vx_k + r \nabla f_k) \label{eqn:cgd_fd_update}
\end{align}
where $\nu = 2 \lambda/r$. We call this version \textit{CGD with Finite Differences} (CGD-FD).

Note that this requires two gradient calls in every iteration, which might be prohibitive. Towards this it is natural to consider a stopping criterion to revert back to using steepest direction whenever the improvement through CGD-FD iterates is low.  
This is done to avoid the extra gradient evaluations after the optimizer has moved to a sufficiently optimal point in the domain.
Such criteria are particularly helpful in loss functions where gradient evaluations are costly and therefore, budgeted. This also gives us the freedom to play around with which direction to choose (steepest or CGD-FD) and quantify when to switch to the other.
For instance, we initially only move in the directions suggested by CGD-FD and switch to steepest once we make a good-enough drop in the function value from the initial point.

In our experiments, we only use CGD-FD for the first $b$ iterations (out of the total budget $T$) to strive for a good-drop in function values initially. While doing so, we also ensure that we only move in the direction $\vp_k$ (CGD-FD, \Eqref{eqn:cgd_fd_update}) if it is a descent direction. Otherwise, we stop using CGD-FD and use the steepest direction henceforth. 
We summarize our CGD-FD method in \Algref{algo:cgd_fd}.

\begin{algorithm}[!htb]
    \caption{Constrained Gradient Descent using Finite Differences (CGD-FD)}
    \label{algo:cgd_fd}
    \textbf{Input}: Objective function $f: \mathcal{D} \rightarrow \mathbb{R}$, initial point $\vx_0$, max iterations $T$, step-size $\alpha$, regularization coefficients $\lambda$, stopping threshold $b$.\\
    \textbf{Output}: Final point $\vx_T$.
    \begin{algorithmic}[1]
    
    \State $\nu \gets 2 \lambda/ r$, \textsc{use\_cgd} $\gets$ \textsc{true}
    \State Gradient Evaluations $c\gets 0$ 
    \Comment{Can be interpreted as cost}
    \For{$k = 0, \ldots, T - 1$}
        \If{$c = T$} \Return $\vx_k$ \Comment{if budget has been exhausted}
        \EndIf
        \If{\textsc{use\_cgd}}
            \State $\vp_k \gets - (1 - \nu) \nabla f_{k} - \nu \nabla f(\vx_{k} + r \nabla f_{k})$
            \State $c \gets c + 2$
            \If{$\nabla f_k^T \vp_k < 0$} \Comment{Check if $\vp_k$ is a Descent Direction}
                \State $\vx_{k + 1} \gets \vx_{k} + \alpha \vp_k$
                \Else
                \State $\vx_{k + 1} \gets \vx_{k} - \alpha \nabla f_k$
                \State \textsc{use\_cgd} $\gets$ \textsc{false} 
            \EndIf
        \Else
            \State $\vx_{k + 1} \gets \vx_{k} - \alpha \nabla f_k$
            \State $c \gets c + 1$
        \EndIf
        \If{$k \ge b$}
        \State \textsc{use\_cgd} $\gets$ \textsc{false}
        \EndIf
    \EndFor
    \State \Return $\vx_{T}$
    \end{algorithmic}
\end{algorithm}

\subsection{CGD-QN: Quasi-Newton Variants of CGD}

We consider quasi-Newton variants of CGD (CGD-QN) where a positive-definite approximation of the Hessian is maintained. 
The update step for CGD-QN is given as:
\(
\vx_{k+1} = \vx_{k} - \alpha \left(I_n + 2 \lambda_k \tilde{G}_k\right) \nabla f_k
\)
where $\tilde{G}_k$ is the Hessian approximation at step $k$, which is updated after every step. Particularly \Twoeqref{eqn:dfp_update}{eqn:bfgs_update} correspond to the updates of DFP and BFGS variants of CGD respectively. We summarize CGD-QN method in \Algref{algo:cgd_qn}.
\begin{algorithm}[!htb]
    \caption{Constrained Gradient Descent: Quasi Newton (CGD-QN)}
    \label{algo:cgd_qn}

    \textbf{Input}: Objective function $f: \mathcal{D} \rightarrow \mathbb{R}$, initial point $\vx_0$, max iterations $T$, step-size $\alpha$, regularization coefficient $\lambda$.\\
    \textbf{Output}: Final point $\vx_T$.
    \begin{algorithmic}[1]
    \State $\tilde{G}_0 \gets I_n$ 
    \For{iteration $k = 0,\ldots, T - 1$}
        \State \(\vp_k \gets - \left(I_n + 2 \lambda_k \tilde{G}_{k}\right) \nabla f_{k}\)
        \If{$\nabla f_k^T \vp_k < 0$}  
        \Comment{Check if $\vp_k$ is a Descent Direction}
        \State $\vx_{k + 1} \gets \vx_{k} + \alpha \vp_k$
        \Else
        \State $\vx_{k + 1} \gets \vx_{k} - \alpha \nabla f_k$
        \EndIf
        \State Update $\tilde{G}$ according to the QN method used. For CGD-DFP and CGD-BFGS follow the updates in \Twoeqref{eqn:dfp_update}{eqn:bfgs_update} respectively. 
    \EndFor
    \State \Return{$\vx_{T}$}      
    \end{algorithmic}  
\end{algorithm}

\section{Numerical Experiments}

We test CGD-FD and CGD-QN on synthetic test functions from \textit{Virtual Library of Simulation Experiments: Test Functions and Datasets}~\footnote{\url{http://www.sfu.ca/~ssurjano}}.

For our experiments we chose total budget $T=40$ and the stopping threshold $b$ as $T/4$. For the hyperparameter $\lambda$, we empirically tested different schedules and strategies for different kinds of functions. We observed that using constant $\lambda$ works well with convex functions. For some non-convex functions, an increasing schedule is preferred. 
We denote this as $\texttt{L}(a, b)$ which is an increasing linear schedule of $T$ values going from $a$ to $b$. For the step-size $\alpha$, a constant-value was found to be suitable in all our experiments.

To measure the initial drop in function value, we compare the \emph{Improvement} for the first step of CGD-FD vs the first step of steepest descent. Improvement (in \%) is given as \(\frac{f(\vx_0) - f(\vx_1)}{f(\vx_0)} * 100\).

\begin{table}[!htb]
    \centering
    \caption{Initial Improvements (in \%) over test functions (Dimensions=$n$) for choices of $\alpha$ and $\lambda$ with a budget $T=40$ and stopping threshold $b=T/4$.}
    \label{tab:tab-1}
    \begin{tabular}{lcccrrr}
    \toprule
\textbf{Test Function} & $n$ & $\lambda$ & $\alpha$ & \textbf{GD} & \textbf{CGD-FD} \\
    \midrule
Quadratic function & $10$ & 0.4 & 0.01 &
    18.89 & \textbf{97.91}\\
    Rotated hyper-ellipsoid function & $5$ & 0.5 & 0.01 &
    15.94 & \textbf{82.76}\\
    Levy function & $2$ & \texttt{L}(0.01, 0.1) & 0.05 &
    23.73 & \textbf{63.21}\\
    Branin function & $2$ & 0.07 & 0.01  &
    37.53 & \textbf{87.07}\\
    Griewank function & $2$ & 40.0 & 0.01 &
    0.01 & \textbf{0.08}\\
    Matyas function & $2$ & 10.0 & 0.01 &
    1.83 & \textbf{34.40}\\
    \bottomrule
    \end{tabular}
\end{table}

\begin{figure}[!htb]
    \centering
    \subfloat[Quadratic function]{%
        \includegraphics[width=0.324\textwidth,
        alt={Line plot showing drop in function value for CGD-FD vs GD on the Quadratic function. Both methods descend rapidly, with CGD-FD flattening earlier.}]{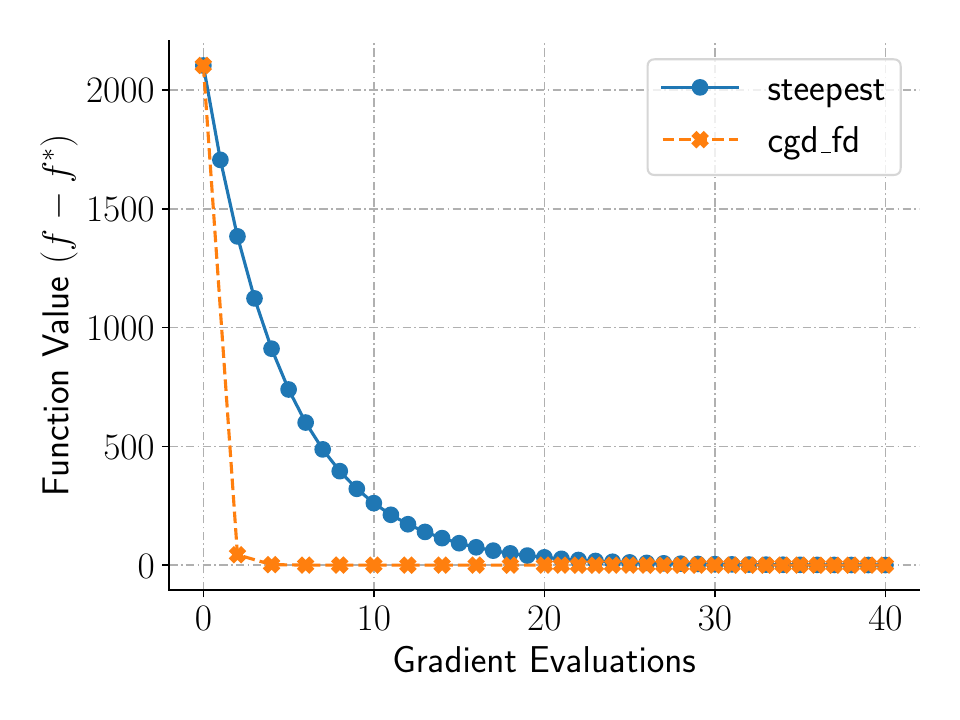}
        \label{fig:qfunc}
    }\hfill
    \subfloat[Rotated hyper-ellipsoid function]{%
        \includegraphics[width=0.324\textwidth,
        alt={Line plot showing drop in function value for CGD-FD vs GD on the Rotated hyper-ellipsoid function. Both methods descend rapidly, with CGD-FD flattening earlier.}]{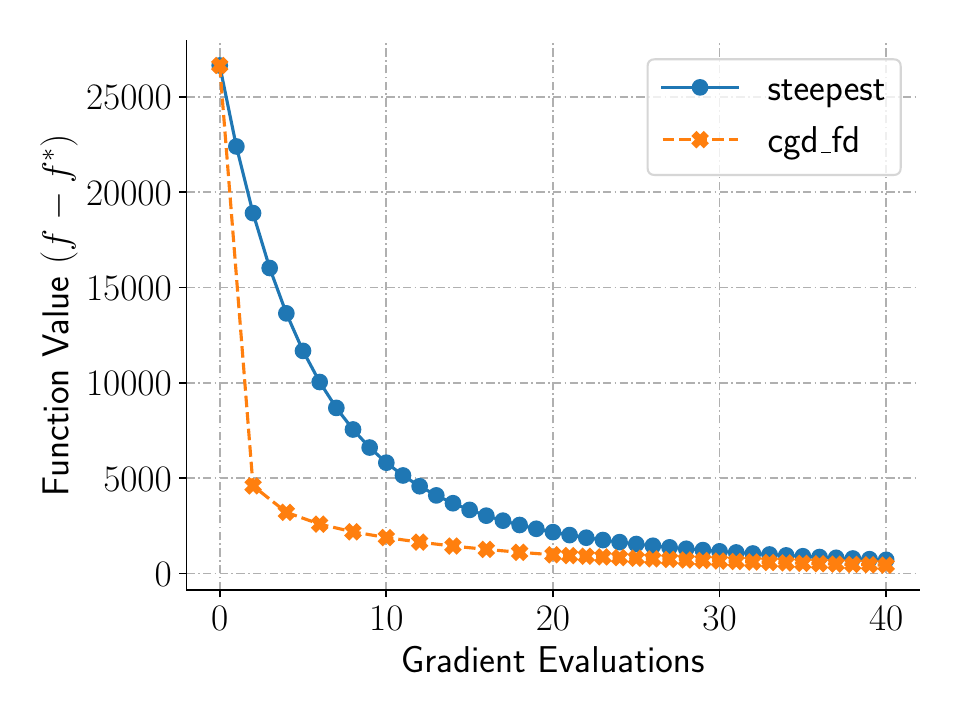}
        \label{fig:rhe}
    }\hfill
    \subfloat[Levy function]{%
        \includegraphics[width=0.324\textwidth,
        alt={Line plot showing drop in function value for CGD-FD vs GD on the Levy function, with CGD-FD showing slightly faster early descent.}]{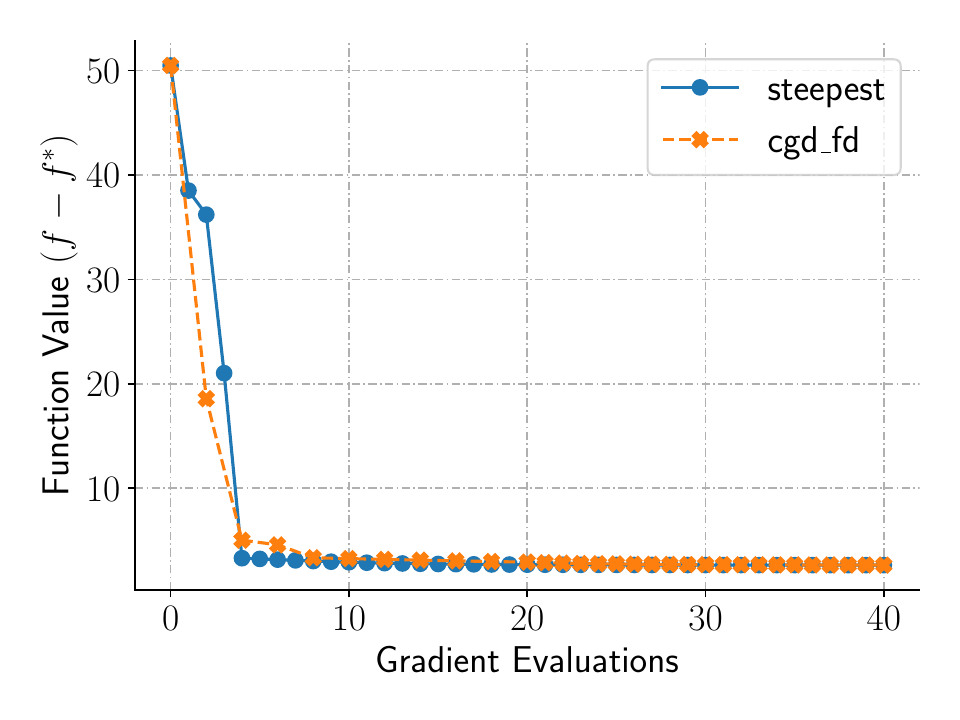}
        \label{fig:levy}
    }\\[1ex]
    \subfloat[Branin function]{%
        \includegraphics[width=0.324\textwidth,
        alt={Line plot showing drop in function value for CGD-FD vs GD on the Branin function. Both methods converge quickly, with CGD-FD slightly ahead.}]{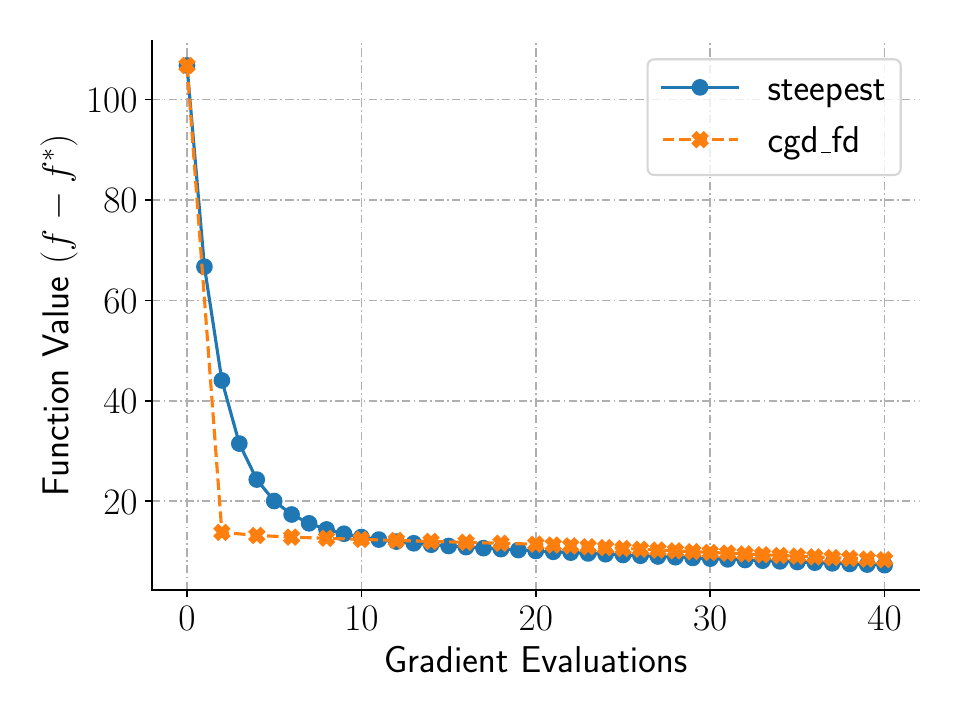}
        \label{fig:branin}
    }\hfill
    \subfloat[Griewank function]{%
        \includegraphics[width=0.324\textwidth,
        alt={Line plot showing drop in function value for CGD-FD vs GD on the Griewank function. CGD-FD reaches a lower value and flattens earlier while GD decreases more gradually.}]{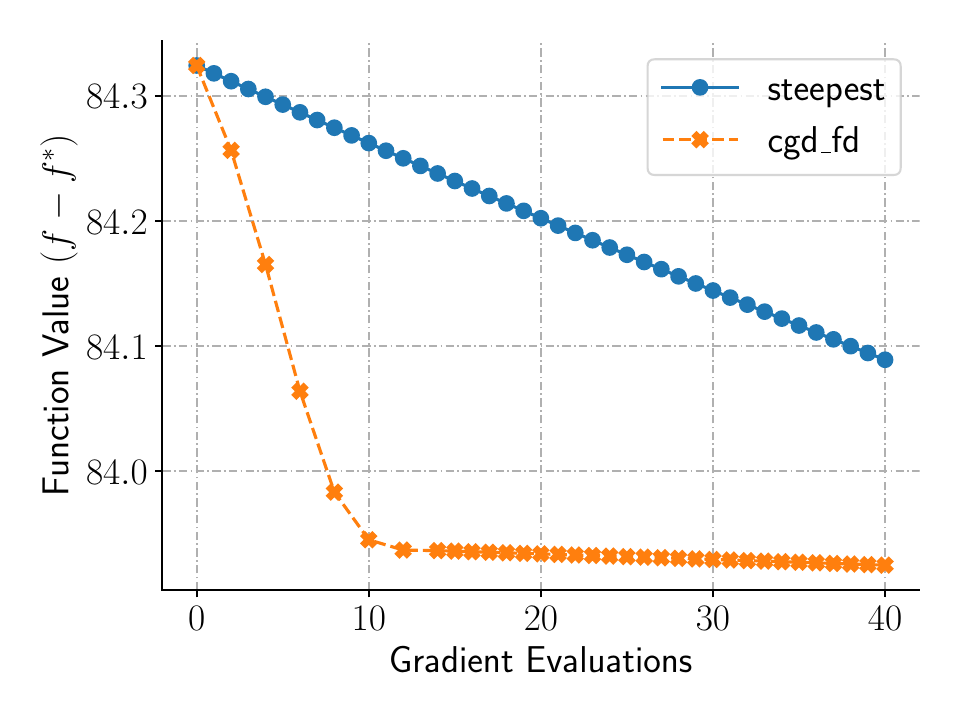}
        \label{fig:griewank}
    }\hfill
    \subfloat[Matyas function]{%
        \includegraphics[width=0.324\textwidth,
        alt={Line plot showing drop in function value for CGD-FD vs GD on the Matyas function. CGD-FD reaches the minimum in fewer steps while GD decreases more gradually.}]{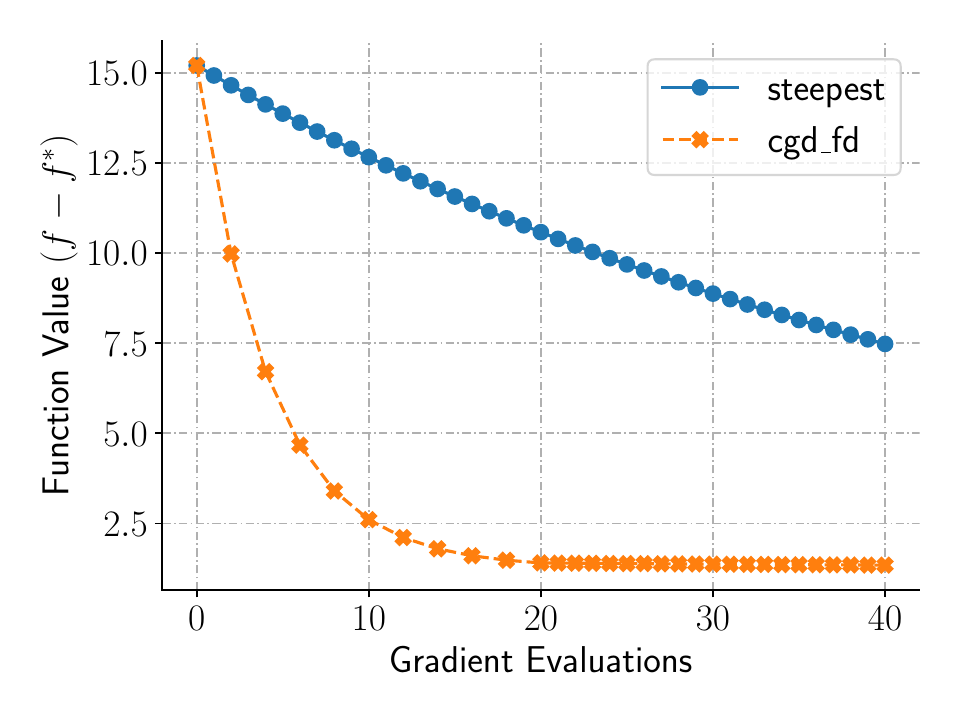}
        \label{fig:matyas}
    }
    \caption[CGD-FD vs GD on six benchmark functions]{Experiments for CGD-FD vs GD: Function value $f(\cdot) - f^*$ vs Gradient Evaluations. \textit{Note}: The $x$-axis of each plot is not iterations but number of gradients evaluated.}
    \label{fig:tab-1}
\end{figure}

Table \ref{tab:tab-1} summarizes our findings and \Figref{fig:tab-1} visualizes function values vs gradient evaluations for the chosen test functions for experiments on CGD-FD.
Observe that in the first step of optimizing Quadratic function (\Figref{fig:qfunc}), CGD-FD has a $97.91\%$ decrease in value compared to that of GD which only had an initial improvement of about $18.89\%$. Similar pattern can also be seen for other functions. This gives us the insight that CGD-FD penalizes the original function well enough to concentrate most of the decrease in value within the first few steps of the trajectory itself.
Also, notice that in Griewank function (\Figref{fig:griewank}) even though the initial step improvement is low ($=0.08\%$), we still see a much rapid decrease in value within the first 10 steps compared to that of the GD trajectory.
Thus, using CGD-FD with appropriate penalization can provide for great boosts in function value well within the starting iterates of the optimization procedure.

\begin{figure}[!htb]
    \centering
    \subfloat[Zakharov function]{%
        \includegraphics[width=0.324\textwidth,
        alt={Line plot showing drop in function value for CGD-QN vs QN methods on the Zakharov function. CGD-QN methods converge faster.}]{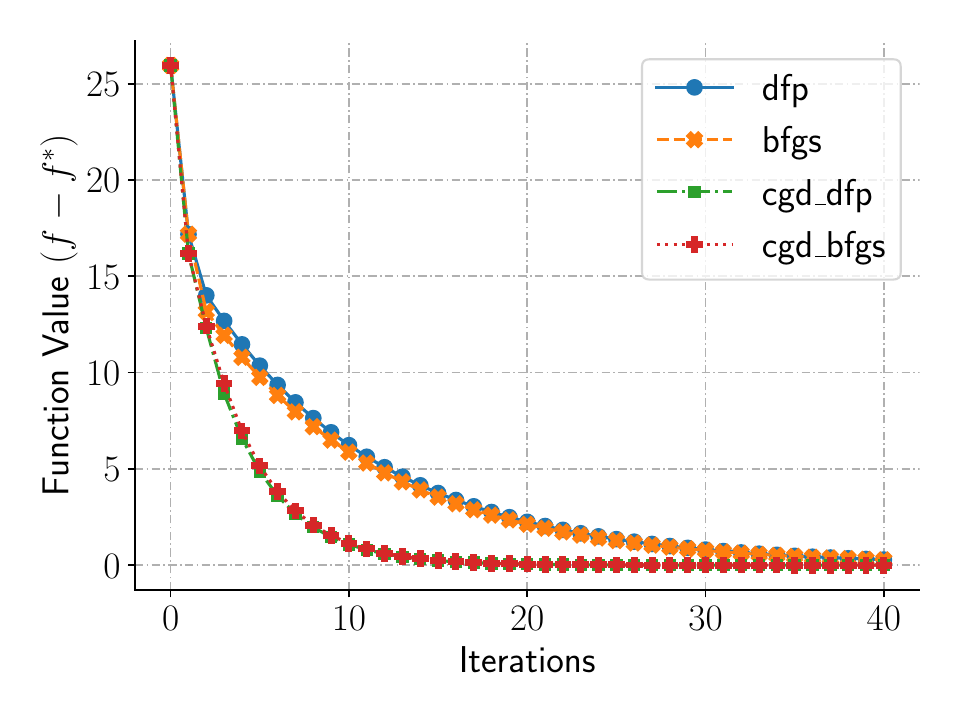}
        \label{fig:zakh}
    }\hfill
    \subfloat[Drop-Wave function]{%
        \includegraphics[width=0.324\textwidth,
        alt={Line plot showing drop in function value for CGD-QN vs QN methods on the Drop-Wave function. Both CGD-QN variants reach lower values earlier, while their vanilla counterparts plateau.}]{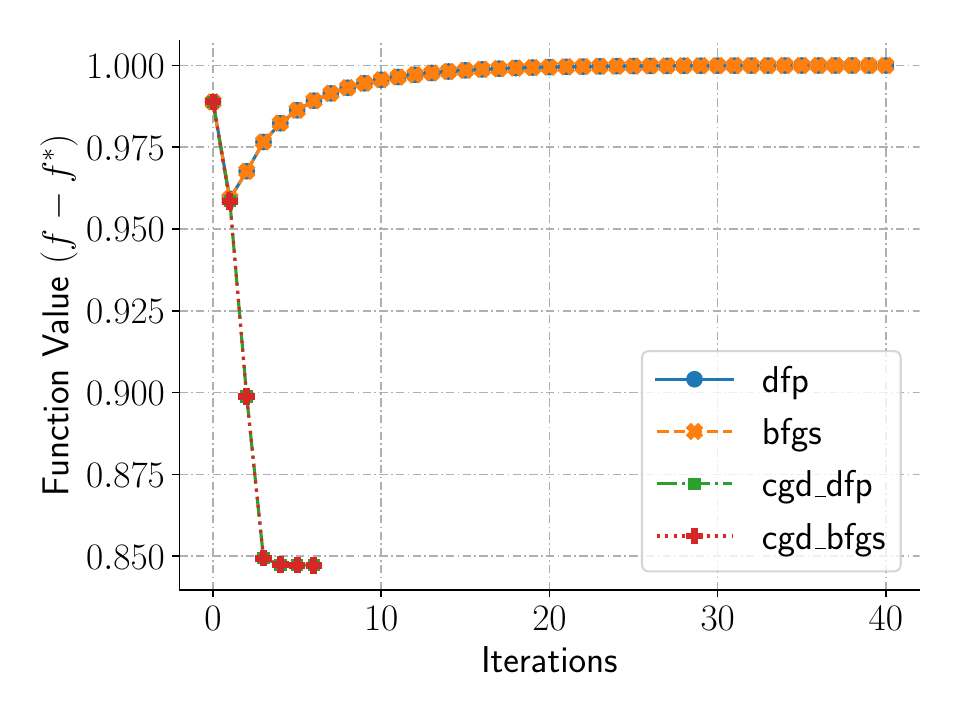}
        \label{fig:dropwave}
    }\hfill
    \subfloat[EggHolder function]{%
        \includegraphics[width=0.324\textwidth,
        alt={Line plot showing drop in function value for CGD-QN vs QN methods on the EggHolder function. Both CGD-QN methods show substantial improvement while their vanilla counterparts remain flat.}]{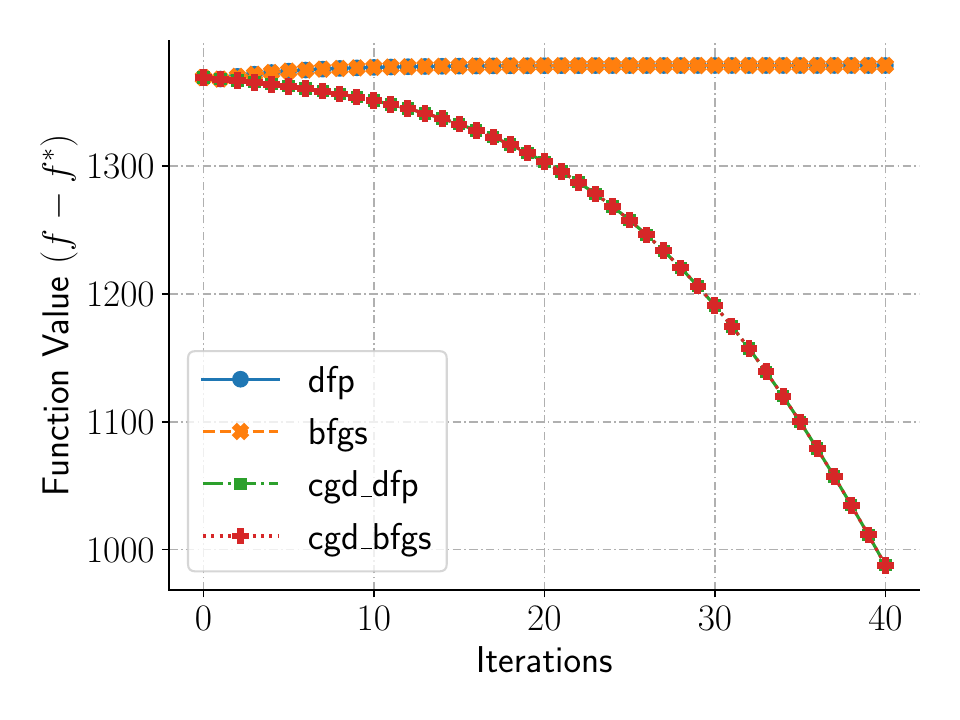}
        \label{fig:eggholder}
    }
    \caption[CGD-QN vs QN methods on three benchmark functions]{Experiments for CGD-QN vs QN methods: Function value $f(\cdot) - f^*$ vs iterations.}
    \label{fig:qn}
\end{figure}

\subsubsection{Experiments for CGD-QN.}
We ran CGD-BFGS and CGD-DFP methods against BFGS and DFP for $T = 40$. The function values vs iterations are visualized in \Figref{fig:qn}. We observe that for most suitable choices of $\lambda$, CGD-DFP and CGD-BFGS exhibit similar behaviour and hence, their trajectories coincide.
Another thing to observe is that in CGD-QN methods, most improvements aren't concentrated in the initial steps since Hessian approximation is still not very great here. Instead the improvements are more gradual and appear over further steps. For example, in Zakharov function (\Figref{fig:zakh}), we can observe that all methods follow the same decrease in function value for the first 1--2 steps and then the CGD-QN methods start to drop more in value. In EggHolder function (\Figref{fig:eggholder}) this effect is most pronounced where the improvements are very gradual but providing with better results than their vanilla counterparts.

\section{Reinterpretation of Explicit Gradient Regularization (EGR)}

Barrett and Dherin~\cite{implicit-grad} proposed \textbf{Explicit Gradient Regularization} (EGR) where the original loss function is regularized with the square of $\normltwo$-norm of the gradient. This regularized objective is then optimized with the intention that the model's parameters will converge to a \textit{flat}-minima and thus, be more generalizable. However, an understanding of why this happens is missing.

\begin{figure}[!htb]
    \centering
    \includegraphics[width=0.45\textwidth,
    alt={Two curves show the original loss landscape and the penalized landscape. Overlaid on these are two optimization trajectories for GD and CGD, respectively. The CGD path explores a broader region and reaches a lower minimum than the GD path, illustrating improved descent behavior by following directions computed from the modified (penalized) loss function.}
    ]{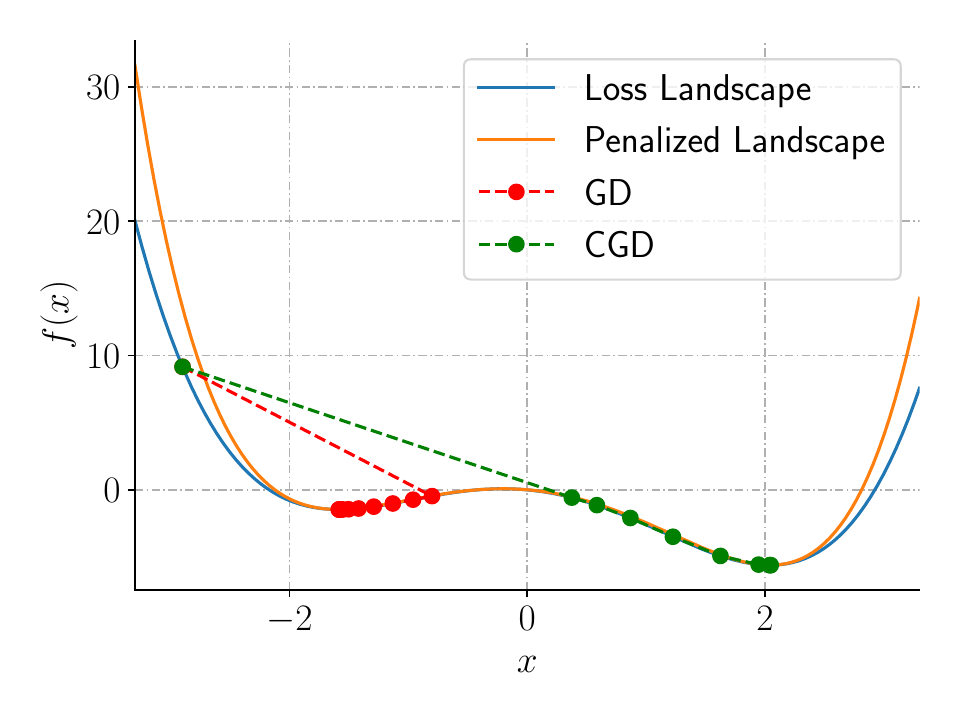}
    \includegraphics[width=0.45\textwidth,
    alt={Two curves showing the original loss landscape and the penalized landscape. The penalized curve slightly deviates from the original, with several regions where local maxima in the loss landscape correspond to local minima in the penalized version.}
    ]{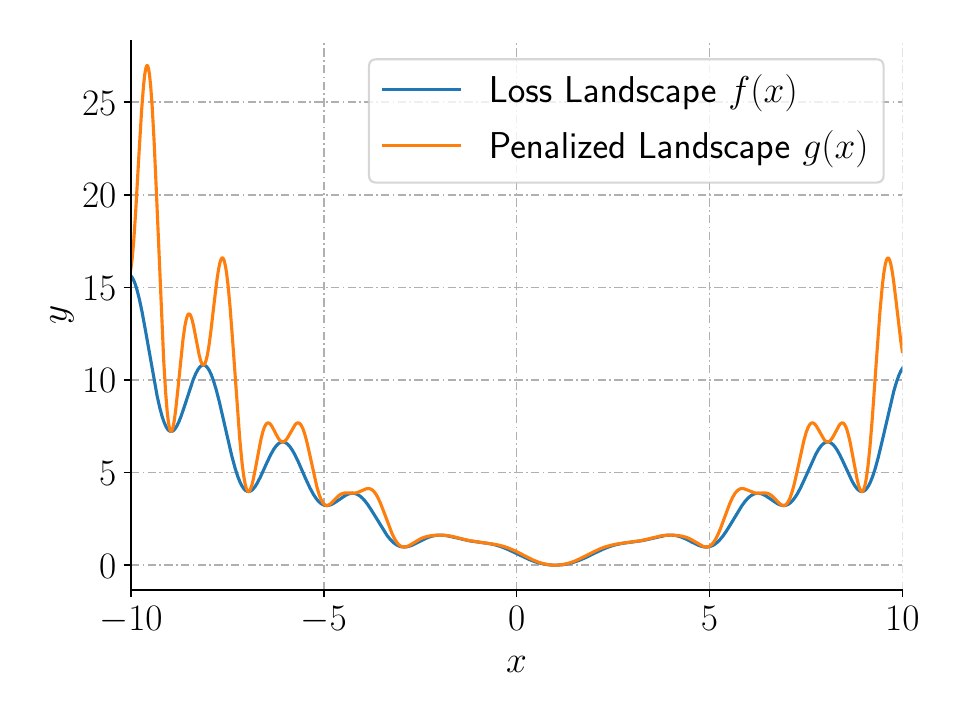}
    \caption{\figleft{} CGD is able to move to a better local minimum by moving along the negative gradients over the modified loss function. \figright{} Local maxima of the original loss function turning into local minima of the penalized loss function.}
    \label{fig:egr}
\end{figure}

We explain EGR through the example visualized in \Figref{fig:egr} \figleft{}. Based on our formulation explained in Section~\ref{sec:cgd}, we can interpret the gradient-regularized function as a steeper version of the original loss function wherein the minima remain same. Due to the gradient penalty, a steep minimum (in the original loss function) would turn steeper while the increase in steepness wouldn't be this high at a flatter minimum. Thus, at a constant step-size, the optimizer will likely overshoot over the sharper minimum due to the extremely high gradient value, while still being able to converge to the less-steeper and preferred flat minimum point. In \Figref{fig:egr} \figleft{} we see how GD gets stuck at a suboptimal point of the function while CGD (GD over the penalized loss function) is able to avoid this point and converges to a better (more optimal) minimum point.

EGR has the drawbacks of having fictitious minima identified in Lemma~\ref{lm:cond_asp}. Since, the loss landscape of neural networks is highly uneven~\cite{DBLP:journals/corr/abs-1712-09913}, it's likely that artificial stationary points are introduced and the optimizer might converge at local-maxima points since their nature changes with regularization. For example, in \Figref{fig:egr} \figright{} we observe artificial stationary points being introduced between local-minima and local-maxima of the original loss function while some local-maxima also turn into local-minima of the penalized loss function. Therefore, one needs to ensure that there are suitable fixes for these scenarios for better performance. Specifically, the direction along of the penalized loss function, $-\nabla g(\cdot)$ should be a descent direction and that we shouldn't stop at points where $\nabla g(\cdot) = \vzero$ but $\nabla f(\cdot) \not = \vzero$ as discussed in Lemma~\ref{lm:cond_asp}.

Another downfall with EGR is that it requires hessian evaluation for each mini-batch while training. This might become a costly operation for bigger models and hence some approximation of the hessian should be used. Another thing we observe from the experimental results for CGD-FD, is that the improvement through optimizing the regularized function is mostly only during the initial steps. Hence, one should only use EGR for some initial steps to reach a good-enough starting point while not exhausting the budget for gradient evaluations.

\section{Conclusions and Future Work}
 In this work, we considered a new line search method that penalizes the norm of the gradient and provides iterates that have lower gradient norm compared to vanilla gradient descent. We identify properties of this algorithm, provide variants that do not require the Hessian and illustrate connections to the widely popular explicit gradient regularization literature.
 
 There are several future directions arising from this work. We would like to investigate the role of different penalty functions $h(\cdot)$ in greater detail.
 Another future direction is using a combination of negative and positive $\{\lambda_k\}$ values in our iterates. Particularly, negative $\{\lambda_k\}$ values might help in descending faster at points where the Hessian is negative definite.
 We would also like to explore applications of our method in more diverse settings like reinforcement learning (RL) and Bayesian optimization (BO).
 In particular, for RL, we can develop penalized variants of existing Policy Gradient Methods~\cite{Williams_1992,pgm_rl,pmlr-v37-schulman15}, which can be evaluated to assess the effectiveness of gradient regularization in RL settings.
 Similarly, for BO tasks, we can construct acquisition functions involving a gradient penalty, in a manner similar to \cite{practical_fobo}, which might improve performance.
 We hope this work sparks further discussion on gradient regularization helping in neural network generalization.

\begin{credits}

\subsubsection{\ackname} 
We thank IHub-Data, IIIT Hyderabad for the research fellowship supporting this work. This work was also supported by the SERB MATRICS project (Grant No. MTR/2023/000042) from the Science and Engineering Research Board (SERB).

\subsubsection{\discintname}
The authors have no competing interests to declare that are relevant to the content of this article.
\end{credits}

\bibliographystyle{splncs04}
\bibliography{references}

\end{document}